\newtheorem{proposition}{Proposition}
\newtheorem{corollary}{Corollary}
\newtheorem{remark}{Remark}
\begin{document}
\title{Asymptotic results for a multivariate version of the alternative fractional Poisson
process\thanks{The authors acknowledge the support of Gruppo
Nazionale per l'Analisi Matematica, la Probabilit\`{a} e le loro
Applicazioni (GNAMPA) of the Istituto Nazionale di Alta Matematica
(INdAM).}}
\author{Luisa Beghin\thanks{Dipartimento di Scienze Statistiche,
Sapienza Universit\`a di Roma, Piazzale Aldo Moro 5, I-00185 Roma,
Italy. e-mail: \texttt{luisa.beghin@uniroma1.it}} \and Claudio
Macci\thanks{Dipartimento di Matematica, Universit\`a di Roma Tor
Vergata, Via della Ricerca Scientifica, I-00133 Rome, Italy.
e-mail: \texttt{macci@mat.uniroma2.it}}}
\date{}
\maketitle
\begin{abstract}
\noindent A multivariate fractional Poisson process was recently
defined in \cite{BeghinMacciAAP2016} by considering a common
independent random time change for a finite dimensional vector of
independent (non-fractional) Poisson processes; moreover it was
proved that, for each fixed $t\geq 0$, it has a suitable
multinomial conditional distribution of the components given their
sum. In this paper we consider another multivariate process
$\{\underline{M}^\nu(t)=(M_1^\nu(t),\ldots,M_m^\nu(t)):t\geq 0\}$
with the same conditional distributions of the components given
their sums, and different marginal distributions of the sums; more
precisely we assume that the one-dimensional marginal
distributions of the process $\left\{\sum_{i=1}^mM_i^\nu(t):t\geq
0\right\}$ coincide with the ones of the alternative fractional
(univariate) Poisson process in \cite{BeghinMacciSPL2013}. We
present large deviation results for
$\{\underline{M}^\nu(t)=(M_1^\nu(t),\ldots,M_m^\nu(t)):t\geq 0\}$,
and this generalizes the result in \cite{BeghinMacciSPL2013}
concerning the univariate case. We also study moderate deviations
and we present some statistical applications concerning the
estimation of the fractional parameter $\nu$.\\

\noindent\textbf{Mathematics Subject Classification:} 60F10, 33E12, 60G22.\\
\textbf{Keywords:} large deviations, moderate deviations, weighted
Poisson distribution, first kind error probability.
\end{abstract}

\section{Introduction}
Fractional Poisson processes are widely studied in the literature
by considering a version of some known equations for the
probability mass functions with fractional derivatives and/or
fractional difference operators (see \cite{Laskin},
\cite{MainardiGorenfloScalas}, \cite{BeghinOrsingherEJP2009},
\cite{BeghinOrsingherEJP2010}, \cite{OrsingherPolitoSPL2012},
\cite{PolitiKaizojiScalas} and \cite{RepinSaichev}). Typically
these processes are often represented in terms of randomly
time-changed and subordinated processes (see e.g.
\cite{KumarNaneVellaisamy} and \cite{MeerschaertNaneVellaisamy})
and appear in several applications (see e.g.
\cite{BiardSaussereau2014}, where the surplus process of an
insurance company is modeled by a compound fractional Poisson
process).

A multivariate (space and/or time) fractional Poisson process was
recently defined in \cite{BeghinMacciAAP2016} by considering a
common independent random time change in terms of the stable
subordinator and/or its inverse for a finite dimensional vector of
independent (non-fractional) Poisson processes. In the proof of
Proposition 4 in \cite{BeghinMacciAAP2016} it was proved that, for
each fixed $t\geq 0$, the conditional (joint) distribution of the
components of this multivariate process given their sum is
multinomial; moreover this conditional multinomial distribution
does not depend on $t$ and on the fractional parameters.

In this paper we consider another multivariate process
$\{\underline{M}^\nu(t)=(M_1^\nu(t),\ldots,M_m^\nu(t)):t\geq 0\}$
with the same conditional distributions of the components given
their sums, but we change the distribution of the sums of the
components. More precisely we assume that the one-dimensional
marginal distributions of the process
$\left\{\sum_{i=1}^mM_i^\nu(t):t\geq 0\right\}$ coincide with the
ones of the alternative fractional (univariate) Poisson process in
\cite{BeghinMacciSPL2013}; in other words we mean the alternative
fractional Poisson processes in \cite{BeghinOrsingherEJP2009} with
a deterministic time-change. Thus it is natural to define the
process in this paper as the multivariate version of the
alternative fractional Poisson process.

The alternative fractional Poisson process in
\cite{BeghinOrsingherEJP2009} appears as the process which counts
the number of changes of direction of a fractional telegraph
process (see e.g. (4.7) in \cite{GarraOrsingherPolitoJSP2014}),
and of a reflected random flight on the surface of a sphere (see
e.g. (4.24) in \cite{DegregorioOrsingher}). Some generalizations
of the alternative fractional Poisson process in
\cite{BeghinOrsingherEJP2009} can be found in
\cite{GarraOrsingherPolitoJAP2015} (see (3.5)) and in
\cite{PoganyTomovski} (see Proposition 2.1). In all these cases we
have a weighted Poisson process as in
\cite{BalakrishnanKozubowski}; the concept of weighted Poisson
process for $\left\{\sum_{i=1}^mM_i^\nu(t):t\geq 0\right\}$ is
illustrated in Remark \ref{rem:wpp}.

The aim of this paper is to present large deviation results for
the multivariate version of the alternative fractional Poisson
process. The theory of large deviations gives an asymptotic
computation of small probabilities on exponential scale (see e.g.
\cite{DemboZeitouni} as references on this topic). The main
results in this paper are Propositions \ref{prop:LD} and
\ref{prop:MD}, which concern large and moderate deviations. The
main tool used in the proofs of Propositions \ref{prop:LD} and
\ref{prop:MD} is the G\"{a}rtner Ellis Theorem (see e.g. Section
2.3 in \cite{DemboZeitouni}). We point out that in
\cite{BeghinMacciSPL2013} we study large deviations only; in
particular Proposition \ref{prop:LD} in this paper reduces to
Proposition 4.1 in \cite{BeghinMacciSPL2013} if we consider the
univariate case $m=1$ (see Remark \ref{rem:LD-m=1}).

The term moderate deviations is used for a class of large
deviation principles governed by the same quadratic rate function
which uniquely vanishes at the origin. Typically moderate
deviations fill the gap between a convergence to zero and an
asymptotic Normality result. We also recall that, as pointed out
in some references (see e.g. \cite{BrycSPL1993} and the references
cited therein), under certain conditions one can obtain the weak
convergence to a centered Normal distribution whose variance is
determined by a large deviation principle obtained by the
G\"{a}rtner Ellis Theorem.

We conclude with the outline of the paper. We start with some
preliminaries in Section \ref{sec:preliminaries}. The multivariate
process studied in this paper is defined in Section
\ref{sec:def-alt-mult-FPP}. Large and moderate deviation results
are presented in Section \ref{sec:LDMDresults}. We conclude with
some statistical applications in Section
\ref{sec:statistical-applications}.

\section{Preliminaries}\label{sec:preliminaries}
We always set $0\log 0=0$. In general we deal with vectors in
$\mathbb{R}^m$ and we use the following notation:
$\underline{x}=(x_1,\ldots,x_m)$, and $\underline{0}=(0,\ldots,0)$
is the null vector; $\underline{x}\geq\underline{0}$ means that
$x_1,\ldots,x_m\geq 0$; we set $s(\underline{x})=\sum_{i=1}^mx_i$
and
$\langle\underline{x},\underline{y}\rangle=\sum_{i=1}^mx_iy_i$.

\subsection{Preliminaries on large (and moderate) deviations}
We recall the basic definitions (see e.g. \cite{DemboZeitouni},
pages 4-5). Let $\mathcal{Z}$ be a Hausdorff topological space
with Borel $\sigma$-algebra $\mathcal{B}_{\mathcal{Z}}$. A speed
function is a family of numbers $\{v_t:t>0\}$ such that
$\lim_{t\to\infty}v_t=\infty$. A lower semi-continuous function
$I:\mathcal{Z}\to [0,\infty]$ is called rate function. A family of
$\mathcal{Z}$-valued random variables $\{Z_t:t>0\}$ satisfies the
\emph{large deviation principle} (LDP for short), as $t\to\infty$,
with speed function $v_t$ and rate function $I$ if
$$\limsup_{t\to\infty}\frac{1}{v_t}\log P(Z_t\in F)\leq-\inf_{z\in F}I(z)\ (\mbox{for all closed sets}\ F)$$
and
$$\liminf_{t\to\infty}\frac{1}{v_t}\log P(Z_t\in G)\geq-\inf_{z\in G}I(z)\ (\mbox{for all open sets}\ G).$$
A rate function $I$ is said to be good if all the level sets
$\{\{z\in\mathcal{Z}:I(z)\leq\gamma\}:\gamma\geq 0\}$ are compact.

The term \emph{moderate deviations} is used when, for all positive
numbers $\{a_t:t>0\}$ such that
\begin{equation}\label{eq:MD-conditions}
a_t\to 0\ \mbox{and}\ ta_t\to\infty\ (\mbox{as}\ t\to\infty),
\end{equation}
we have a LDP for suitable centered random variables on
$\mathcal{Z}=\mathbb{R}^m$ (for some $m\geq 1$) with speed $1/a_t$
and the same quadratic rate function which uniquely vanishes at
the origin of $\mathbb{R}^m$ (we mean that the rate function does
not depend on the choice of $\{a_t:t>0\}$). Typically moderate
deviations fill the gap between two regimes (for the second one
see Remark \ref{rem:Asymptotic-Normality}):
\begin{itemize}
\item a convergence (at least in probability) to zero of centered random
variables (case $a_t=\frac{1}{t}$);
\item a weak convergence to a centered Normal distribution (case
$a_t=1$).
\end{itemize}
Note that in both case one condition in \eqref{eq:MD-conditions}
fails.

\subsection{Preliminaries on (generalized) Mittag-Leffler functions}
Let
\begin{equation}\label{eq:def-MLfunction}
E_{\alpha,\beta}(x):=\sum_{r\geq 0}\frac{x^r}{\Gamma(\alpha
r+\beta)}
\end{equation}
be the Mittag-Leffler function (see e.g. \cite{Podlubny}, page
17), and let
$$E_{\alpha,\beta}^\gamma(x):=\sum_{j\geq 0}\frac{(\gamma)^{(j)}x^j}{j!\Gamma(\alpha j+\beta)}$$
be the generalized Mittag-Leffler function (see e.g. (1.9.1) in
\cite{KilbasSrivastavaTrujillo}) where
$$(\gamma)^{(j)}:=\left\{\begin{array}{ll}
\gamma(\gamma+1)\cdots (\gamma+j-1)&\ \mathrm{if}\ j\geq 1\\
1&\ \mathrm{if}\ j=0,
\end{array}\right.$$
is the rising factorial, also called Pochhammer symbol (see e.g.
(1.5.5) in \cite{KilbasSrivastavaTrujillo}). Note that we have
$E_{\alpha,\beta}^1$, i.e. $E_{\alpha,\beta}^\gamma$ with
$\gamma=1$, coincides with $E_{\alpha,\beta}$ in
\eqref{eq:def-MLfunction}. In view of what follows (see e.g.
(1.8.27) in \cite{KilbasSrivastavaTrujillo}) we recall that, if we
use the symbol $\sim$ to mean that the ratio tends to 1, we have
\begin{equation}\label{eq:ML-asymptotic}
E_{\nu,\beta}(z)\sim\frac{1}{\nu}z^{(1-\beta)/\nu}e^{z^{1/\nu}}\
\mbox{as}\ z\to\infty;
\end{equation}
actually we can say that
\begin{equation}\label{eq:ML-asymptotic-with-remainder}
E_{\nu,\beta}(z)=\frac{1}{\nu}z^{(1-\beta)/\nu}e^{z^{1/\nu}}+r(z),\
\mbox{where}\ r(z)\to 0\ \mbox{as}\ z\to\infty.
\end{equation}

\section{An alternative multivariate fractional Poisson process}\label{sec:def-alt-mult-FPP}
Let $\underline{\lambda}\in(0,\infty)^m$ be arbitrarily fixed
(actually we could consider
$\underline{\lambda}\in[0,\infty)^m\setminus\{\underline{0}\}$
with suitable modifications). We present a multivariate fractional
Poisson process $\{\underline{M}^\nu(t):t\geq 0\}$ where, as in
the proof of Proposition 4 in \cite{BeghinMacciAAP2016}, for all
$t\geq 0$ we consider the following conditional multinomial
distribution of $(M_1^\nu(t),\ldots,M_m^\nu(t))$ given their sum
$s(\underline{M}^\nu(t))=\sum_{i=1}^mM_i^\nu(t)$:
$$P(\underline{M}^\nu(t)=\underline{k}|s(\underline{M}^\nu(t))=s(\underline{k}))=\frac{(s(\underline{k}))!}{k_1!\cdots k_m!}
\prod_{i=1}^m\left(\frac{\lambda_i}{s(\underline{\lambda})}\right)^{k_i}\
\mbox{for all integers}\ k_1,\ldots,k_m\geq 0.$$ Moreover we
assume that the one-dimensional marginal distributions of the sum
process $\{s(\underline{M}^\nu(t)):t\geq 0\}$ coincide with the
ones of the alternative fractional (univariate) Poisson process in
\cite{BeghinMacciSPL2013} with parameter $s(\underline{\lambda})$
(in place of $\lambda$), i.e.
$$P(s(\underline{M}^\nu(t))=h)=\frac{(s(\underline{\lambda})t^\nu)^h}{\Gamma(\nu h+1)}\cdot\frac{1}{E_{\nu,1}(s(\underline{\lambda})t^\nu)}
\ \mbox{for all integer}\ h\geq 0.$$

\begin{remark}[Weighted Poisson process]\label{rem:wpp}
For all $t\geq 0$ we have
$$P(s(\underline{M}^\nu(t))=h)=\frac{w(h)\frac{(s(\underline{\lambda})t^\nu)^h}{h!}e^{-s(\underline{\lambda})t^\nu}}
{\sum_{j\geq
0}w(j)\frac{(s(\underline{\lambda})t^\nu)^j}{j!}e^{-s(\underline{\lambda})t^\nu}}\
\mbox{for all integer}\ h\geq 0,$$ where
$w(h):=\frac{h!}{\Gamma(\nu h+1)}$.
\end{remark}

Thus, for each fixed $t\geq 0$, we consider the following
multivariate probability mass function for the random variable
$\underline{M}^\nu(t)$:
\begin{multline*}
P(\underline{M}^\nu(t)=\underline{k})=P(\underline{M}^\nu(t)=\underline{k}|s(\underline{M}^\nu(t))=s(\underline{k}))P(s(\underline{M}^\nu(t))=s(\underline{k}))\\
=\frac{(s(\underline{k}))!}{k_1!\cdots k_m!}
\prod_{i=1}^m\left(\frac{\lambda_i}{s(\underline{\lambda})}\right)^{k_i}\cdot
\frac{(s(\underline{\lambda})t^\nu)^{s(\underline{k})}}{\Gamma(\nu(s(\underline{k}))+1)}\cdot\frac{1}{E_{\nu,1}(s(\underline{\lambda})t^\nu)}\\
=\frac{(s(\underline{k}))!}{k_1!\cdots k_m!}
\prod_{i=1}^m\lambda_i^{k_i}\cdot\frac{(t^\nu)^{s(\underline{k})}}{\Gamma(\nu(s(\underline{k}))+1)}\cdot\frac{1}{E_{\nu,1}(s(\underline{\lambda})t^\nu)}\
\mbox{for all integers}\ k_1,\ldots,k_m\geq 0.
\end{multline*}
The moment generating functions of the ($m$-variate) random
variables $\{\underline{M}^\nu(t):t\geq 0\}$, with argument
$\underline{\theta}\in\mathbb{R}^m$, are
\begin{multline*}
\mathbb{E}\left[e^{\langle\underline{\theta},\underline{M}^\nu(t)\rangle}\right]
=\sum_{\underline{k}\geq\underline{0}}e^{\sum_{i=1}^m\theta_ik_i}P(\underline{M}^\nu(t)=\underline{k})\\
=\frac{1}{E_{\nu,1}(s(\underline{\lambda})t^\nu)}\sum_{\underline{k}\geq\underline{0}}\frac{(s(\underline{k}))!}{k_1!\cdots
k_m!}\prod_{i=1}^m(e^{\theta_i}\lambda_i)^{k_i}\cdot\frac{(t^\nu)^{s(\underline{k})}}{\Gamma(\nu(s(\underline{k}))+1)}\\
=\frac{1}{E_{\nu,1}(s(\underline{\lambda})t^\nu)}\sum_{r\geq
0}\frac{\left(\left(\sum_{i=1}^m\lambda_ie^{\theta_i}\right)t^\nu\right)^r}{\Gamma(\nu
r+1)}
\end{multline*}
and therefore
\begin{equation}\label{eq:MGF}
\mathbb{E}\left[e^{\langle\underline{\theta},\underline{M}^\nu(t)\rangle}\right]
=\frac{E_{\nu,1}\left(\left(\sum_{i=1}^m\lambda_ie^{\theta_i}\right)t^\nu\right)}{E_{\nu,1}(s(\underline{\lambda})t^\nu)}.
\end{equation}
Moreover the expected values are
\begin{equation}\label{eq:expected-value}
\mathbb{E}[\underline{M}^\nu(t)]=\nabla\left.\mathbb{E}\left[e^{\langle\underline{\theta},\underline{M}^\nu(t)\rangle}\right]
\right|_{\underline{\theta}=\underline{0}}
=\frac{E_{\nu,\nu+1}^2(s(\underline{\lambda})t^\nu)}{E_{\nu,1}(s(\underline{\lambda})t^\nu)}t^\nu\underline{\lambda}
=\frac{E_{\nu,\nu}(s(\underline{\lambda})t^\nu)}{E_{\nu,1}(s(\underline{\lambda})t^\nu)}\frac{t^\nu}{\nu}\underline{\lambda}
\end{equation}
by (1.8.22) in \cite{KilbasSrivastavaTrujillo} and some
computations with generalized Mittag-Leffler functions; note that,
if we set $m=1$ and if we replace $t^\nu$ with $t$, formula
\eqref{eq:expected-value} meets (4.6) in
\cite{BeghinOrsingherEJP2009}.

\section{Large and moderate deviations}\label{sec:LDMDresults}
We start with large deviations.

\begin{proposition}\label{prop:LD}
The family of random variables
$\left\{\frac{\underline{M}^\nu(t)}{t}:t>0\right\}$ satisfies the
LDP with speed $v_t=t$ and good rate function $\Lambda^*$ defined
by
$$\Lambda^*(\underline{x}):=\left\{\begin{array}{ll}
\sum_{i=1}^mx_i\log\left(\frac{\nu^\nu}{\lambda_i}\frac{x_i}{(s(\underline{x}))^{1-\nu}}\right)-\nu
s(\underline{x})+(s(\underline{\lambda}))^{1/\nu}&\ \mbox{if}\ \underline{x}\in[0,\infty)^m\\
\infty&\ \mbox{otherwise}
\end{array}\right.$$
(we recall that $0\log 0=0$).
\end{proposition}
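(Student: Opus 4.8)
The plan is to apply the G\"{a}rtner--Ellis Theorem to the family $\{\underline{M}^\nu(t)/t:t>0\}$ with speed $v_t=t$, so the first step is to compute the limiting scaled cumulant generating function
$$\Lambda(\underline{\theta}):=\lim_{t\to\infty}\frac{1}{t}\log\mathbb{E}\left[e^{\langle\underline{\theta},\underline{M}^\nu(t)\rangle}\right].$$
Starting from the explicit moment generating function \eqref{eq:MGF}, I would write
$$\frac{1}{t}\log\mathbb{E}\left[e^{\langle\underline{\theta},\underline{M}^\nu(t)\rangle}\right]=\frac{1}{t}\log E_{\nu,1}\!\left(\Big(\textstyle\sum_{i=1}^m\lambda_ie^{\theta_i}\Big)t^\nu\right)-\frac{1}{t}\log E_{\nu,1}(s(\underline{\lambda})t^\nu),$$
and then invoke the asymptotics \eqref{eq:ML-asymptotic-with-remainder} with $\beta=1$, which gives $\log E_{\nu,1}(z)=z^{1/\nu}-\log\nu+o(1)$ as $z\to\infty$. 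Setting $z=At^\nu$ yields $z^{1/\nu}=A^{1/\nu}t$, so each term contributes its leading exponent divided by $t$; in the limit the $\log\nu$ corrections and the $o(1)$ terms vanish, and I obtain
$$\Lambda(\underline{\theta})=\Big(\textstyle\sum_{i=1}^m\lambda_ie^{\theta_i}\Big)^{1/\nu}-(s(\underline{\lambda}))^{1/\nu}.$$

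The second step is to check the hypotheses of the G\"{a}rtner--Ellis Theorem. Since $e^{\theta_i}>0$ for every real $\theta_i$, the function $\Lambda$ is finite on all of $\mathbb{R}^m$; it is moreover smooth, being a composition of the strictly positive map $\underline{\theta}\mapsto\sum_i\lambda_ie^{\theta_i}$ with $u\mapsto u^{1/\nu}$. Hence the origin lies in the interior of the (full) domain of finiteness, and $\Lambda$ is essentially smooth and lower semicontinuous. The version of the theorem for a finite, differentiable $\Lambda$ (Section 2.3 of \cite{DemboZeitouni}) then yields the LDP with speed $t$ and good rate function given by the Legendre--Fenchel transform
$$\Lambda^*(\underline{x})=\sup_{\underline{\theta}\in\mathbb{R}^m}\left\{\langle\underline{\theta},\underline{x}\rangle-\Lambda(\underline{\theta})\right\}.$$

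It remains to evaluate this transform and match it to the stated expression. For $\underline{x}\in(0,\infty)^m$ I would locate the critical point by differentiating: writing $S:=\sum_i\lambda_ie^{\theta_i}$, the stationarity conditions are $x_j=\tfrac{1}{\nu}S^{1/\nu-1}\lambda_je^{\theta_j}$. Summing over $j$ collapses the right-hand side to $\tfrac{1}{\nu}S^{1/\nu}$, giving $S^{1/\nu}=\nu\,s(\underline{x})$; substituting back gives $\lambda_je^{\theta_j}=\nu^\nu x_j(s(\underline{x}))^{\nu-1}$, hence $\theta_j=\log\!\big(\tfrac{\nu^\nu}{\lambda_j}\tfrac{x_j}{(s(\underline{x}))^{1-\nu}}\big)$. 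Inserting these values into $\langle\underline{\theta},\underline{x}\rangle-\Lambda(\underline{\theta})$, the penalty term becomes $-S^{1/\nu}=-\nu\,s(\underline{x})$, and together with the constant $(s(\underline{\lambda}))^{1/\nu}$ this reproduces the displayed formula for $\Lambda^*$. Because $\Lambda$ is convex, the objective is concave in $\underline{\theta}$ and this stationary point is the global maximizer.

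Finally I would address the boundary behaviour, which is where the main care is needed. If some coordinate $x_j<0$, sending $\theta_j\to-\infty$ makes $\theta_jx_j\to+\infty$ while $\lambda_je^{\theta_j}\to 0$ keeps $\Lambda$ bounded, so the supremum is $+\infty$, matching the value $\infty$ prescribed off $[0,\infty)^m$. When $\underline{x}\in[0,\infty)^m$ has some vanishing coordinates, the critical-point formula is recovered in the limit $\theta_j\to-\infty$ with the convention $0\log 0=0$, so that those coordinates drop out of both the linear term and the sum $S$; a continuity argument in $\underline{x}$ confirms that the closed-form expression remains valid there. The main obstacle is precisely this boundary analysis together with the explicit inversion of the stationarity equations: the interior computation is routine once $S^{1/\nu}=\nu\,s(\underline{x})$ is isolated, but verifying that the formula extends continuously to the faces $\{x_j=0\}$ and that the supremum genuinely diverges outside the nonnegative orthant requires the separate limiting arguments just sketched.
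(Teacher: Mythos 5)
Your proposal is correct and follows essentially the same route as the paper: the G\"{a}rtner--Ellis Theorem applied via the Mittag-Leffler asymptotics to get $\Lambda(\underline{\theta})=\left(\sum_{i=1}^m\lambda_ie^{\theta_i}\right)^{1/\nu}-(s(\underline{\lambda}))^{1/\nu}$, followed by the same case-by-case evaluation of the Legendre transform (interior stationary point, $\theta_i\to-\infty$ arguments for negative and vanishing coordinates). The only differences are cosmetic: you derive the maximizer by summing the stationarity equations (the paper states it and verifies), and your boundary treatment via letting off-support coordinates tend to $-\infty$ is exactly the paper's reduction to the subsystem indexed by $\mathcal{S}(\underline{x})=\{i:x_i>0\}$.
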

\begin{proof}
We apply G\"{a}rtner Ellis Theorem. Then, by taking into account
\eqref{eq:MGF} and \eqref{eq:ML-asymptotic}, for all
$\underline{\theta}\in\mathbb{R}^m$ we have
\begin{equation}\label{eq:GElimit-LD}
\lim_{t\to\infty}\frac{1}{t}\log\mathbb{E}\left[e^{\langle\underline{\theta},\underline{M}^\nu(t)\rangle}\right]
=\left(\sum_{i=1}^m\lambda_ie^{\theta_i}\right)^{1/\nu}-\left(\sum_{i=1}^m\lambda_i\right)^{1/\nu}
=\left(\sum_{i=1}^m\lambda_ie^{\theta_i}\right)^{1/\nu}-(s(\underline{\lambda}))^{1/\nu}=:\Lambda(\underline{\theta}).
\end{equation}
So, since $\Lambda$ is finite everywhere and differentiable, the
LDP holds with speed $v_t=t$ and good rate function $\Lambda^*$
defined by
$$\Lambda^*(\underline{x}):=\sup_{\underline{\theta}\in\mathbb{R}^m}\{\langle\underline{\theta},\underline{x}\rangle-\Lambda(\underline{\theta})\}.$$

We conclude the proof showing that this rate function coincides
with the one in the statement.\\
$\bullet$ The case $\underline{x}\notin[0,\infty)^m$ is trivial;
in fact, in this case, we have $x_i<0$ for some
$i\in\{1,\ldots,m\}$, and therefore
$\Lambda^*(\underline{x})=\infty$ by taking $\theta_j=0$ for
$j\neq i$, and by letting $\theta_i\to-\infty$. On the other hand
we have $\Lambda^*(\underline{x})=\infty$ because
$P(\underline{M}^\nu(t)/t\in [0,\infty)^m)=1$ for all $t>0$, and
$[0,\infty)^m$ is a closed set.\\
$\bullet$ For $\underline{x}\in(0,\infty)^m$ we consider the
system of equations (for $i\in\{1,\ldots,m\}$)
$$x_i=\frac{\partial}{\partial\theta_i}\Lambda(\underline{\theta}),\
\mbox{i.e.}\
x_i=\frac{1}{\nu}\left(\sum_{j=1}^m\lambda_je^{\theta_j}\right)^{1/\nu-1}\lambda_ie^{\theta_i},$$
and we have a unique solution
$\underline{\theta}(\underline{x})=(\theta_1(\underline{x}),\ldots,\theta_m(\underline{x}))$
defined by
$$\theta_i(\underline{x})=\log\left(\frac{\nu^\nu}{\lambda_i}\frac{x_i}{(s(\underline{x}))^{1-\nu}}\right);$$
in fact
\begin{multline*}
\frac{1}{\nu}\left(\sum_{j=1}^m\lambda_je^{\theta_j(\underline{x})}\right)^{1/\nu-1}\lambda_ie^{\theta_i(\underline{x})}
=\frac{1}{\nu}\left(\sum_{j=1}^m\lambda_j\cdot\frac{\nu^\nu}{\lambda_j}\frac{x_j}{(s(\underline{x}))^{1-\nu}}\right)^{1/\nu-1}
\lambda_i\cdot\frac{\nu^\nu}{\lambda_i}\frac{x_i}{(s(\underline{x}))^{1-\nu}}\\
=\frac{1}{\nu}\left(\nu^\nu(s(\underline{x}))^{1-(1-\nu)}\right)^{1/\nu-1}\cdot
\nu^\nu\frac{x_i}{(s(\underline{x}))^{1-\nu}}=x_i.
\end{multline*}
Thus
\begin{multline*}
\Lambda^*(\underline{x}):=\langle\underline{\theta}(\underline{x}),\underline{x}\rangle-\Lambda(\underline{\theta}(\underline{x}))\\
=\sum_{i=1}^mx_i\log\left(\frac{\nu^\nu}{\lambda_i}\frac{x_i}{(s(\underline{x}))^{1-\nu}}\right)
-\left(\sum_{i=1}^m\lambda_i\cdot\frac{\nu^\nu}{\lambda_i}\frac{x_i}{(s(\underline{x}))^{1-\nu}}\right)^{1/\nu}+(s(\underline{\lambda}))^{1/\nu}\\
=\sum_{i=1}^mx_i\log\left(\frac{\nu^\nu}{\lambda_i}\frac{x_i}{(s(\underline{x}))^{1-\nu}}\right)
-\left(\nu^\nu(s(\underline{x}))^\nu\right)^{1/\nu}+(s(\underline{\lambda}))^{1/\nu}\\
=\sum_{i=1}^mx_i\log\left(\frac{\nu^\nu}{\lambda_i}\frac{x_i}{(s(\underline{x}))^{1-\nu}}\right)
-\nu s(\underline{x})+(s(\underline{\lambda}))^{1/\nu}.
\end{multline*}
$\bullet$ The final case concerns
$\underline{x}\in[0,\infty)^m\setminus (0,\infty)^m$. For
$\underline{x}=\underline{0}$ we have
$$\Lambda^*(\underline{0})=\sup_{\underline{\theta}\in\mathbb{R}^m}\left\{-\left(\sum_{i=1}^m\lambda_ie^{\theta_i}\right)^{1/\nu}
+(s(\underline{\lambda}))^{1/\nu}\right\}=(s(\underline{\lambda}))^{1/\nu}$$
by letting $\theta_1,\ldots,\theta_m\to-\infty$. For
$\underline{x}\in[0,\infty)^m\setminus
((0,\infty)^m\cup\{\underline{0}\})$ we consider the set
$\mathcal{S}(\underline{x}):=\{i\in\{1,\ldots,m\}:x_i>0\}$, and we
have $\emptyset\neq\mathcal{S}(\underline{x})\neq\{1,\ldots, m\}$.
Then
$$\Lambda^*(\underline{x}):=\sup_{\underline{\theta}\in\mathbb{R}^m}\left\{\sum_{i\in\mathcal{S}(\underline{x})}\theta_ix_i
-\left(\sum_{i=1}^m\lambda_ie^{\theta_i}\right)^{1/\nu}+(s(\underline{\lambda}))^{1/\nu}\right\}$$
and, after letting $\theta_i\to-\infty$ for
$i\notin\mathcal{S}(\underline{x})$, we can consider the system of
equations
$$x_i=\frac{1}{\nu}\left(\sum_{j=1}^m\lambda_je^{\theta_j}\right)^{1/\nu-1}\lambda_ie^{\theta_i}\ (\mbox{for}\ i\in\mathcal{S}(\underline{x}))$$
and we can adapt what we said above for
$\underline{x}\in(0,\infty)^m$.
\end{proof}

We can say that $\Lambda^*(\underline{x})=0$ if and only if
$\underline{x}=\nabla\Lambda(\underline{0})$, where
\begin{equation}\label{eq:gradiente-Lambda-0}
\nabla\Lambda(\underline{0})=\frac{1}{\nu}\cdot(s(\underline{\lambda}))^{1/\nu-1}\cdot\underline{\lambda}.
\end{equation}
In particular we can check that
\begin{multline*}
\Lambda^*(\nabla\Lambda(\underline{0}))=\sum_{i=1}^m\frac{1}{\nu}\cdot(s(\underline{\lambda}))^{1/\nu-1}\lambda_i
\log\left(\frac{\nu^\nu}{\lambda_i}\frac{\frac{1}{\nu}\cdot(s(\underline{\lambda}))^{1/\nu-1}\lambda_i}
{(\frac{1}{\nu}\cdot(s(\underline{\lambda}))^{1/\nu-1+1})^{1-\nu}}\right)\\
-\nu\cdot\frac{1}{\nu}\cdot(s(\underline{\lambda}))^{1/\nu-1+1}+(s(\underline{\lambda}))^{1/\nu}=0.
\end{multline*}

The following remarks concern Proposition \ref{prop:LD}.

\begin{remark}[The case $m=1$]\label{rem:LD-m=1}
Proposition \ref{prop:LD} here reduces to Proposition 4.1 in
\cite{BeghinMacciSPL2013} when $m=1$ (actually some parts of the
proof are simplified). In particular for the rate function
$\Lambda^*$ (with $x$ in place of $\underline{x}$ and
$s(\underline{x})$, and $\lambda$ in place of
$\underline{\lambda}$ and $s(\underline{\lambda})$) we have
$$\Lambda^*(x):=\left\{\begin{array}{ll}
x\log\left(\frac{(\nu x)^\nu}{\lambda}\right)-\nu x+\lambda^{1/\nu}&\ \mbox{if}\ x\geq 0\\
\infty&\ \mbox{if}\ x<0
\end{array}\right.
=I_{\nu,\lambda}^{(A)}(x),$$ where $I_{\nu,\lambda}^{(A)}$ is the
rate function in Proposition 4.1 in \cite{BeghinMacciSPL2013}.
\end{remark}

\begin{remark}[The case $\nu=1$]\label{rem:LD-nu=1}
We have
$$\Lambda_{(\nu=1)}^*(\underline{x}):=\left\{\begin{array}{ll}
\sum_{i=1}^m\left\{x_i\log\left(\frac{x_i}{\lambda_i}\right)-x_i+\lambda_i\right\}&\ \mbox{if}\ \underline{x}\in[0,\infty)^m\\
\infty&\ \mbox{otherwise}.
\end{array}\right.$$
Thus
$\Lambda_{(\nu=1)}^*(\underline{x})=\sum_{i=1}^mI_{1,\lambda}^{(A)}(x_i)$
for all $\underline{x}\in\mathbb{R}^m$, where
$I_{\nu,\lambda}^{(A)}$ is the rate function in Proposition 4.1 in
\cite{BeghinMacciSPL2013} (as in Remark \ref{rem:LD-m=1}); this
equality agrees the well-known independence of the one-dimensional
marginal processes $\{M_1^1(t):t\geq 0\},\ldots,\{M_m^1(t):t\geq
0\}$.
\end{remark}

\begin{remark}[An alternative expression of $\Lambda^*$]\label{rem:LD-alternative-expression-rf}
If we consider the relative entropy of a probability measure
$\underline{p}=(p_1,\ldots,p_m)$ on $\{1,\ldots,m\}$ with respect
to another one $\underline{q}=(q_1,\ldots,q_m)$, i.e.
$$H(\underline{p};\underline{q}):=\sum_{i=1}^mp_i\log\left(\frac{p_i}{q_i}\right),$$
for $\underline{x}\in [0,\infty)^m$ we have
\begin{multline*}
\Lambda^*(\underline{x})=\sum_{i=1}^mx_i\log\left(\frac{x_i/s(\underline{x})}{\lambda_i/s(\underline{\lambda})}\right)
+\sum_{i=1}^mx_i\log\left(\frac{\nu^\nu}{(s(\underline{x}))^{1-\nu}}\frac{s(\underline{x})}{s(\underline{\lambda})}\right)-\nu
s(\underline{x})+(s(\underline{\lambda}))^{1/\nu}\\
=s(\underline{x})H\left(\frac{\underline{x}}{s(\underline{x})};\frac{\underline{\lambda}}{s(\underline{\lambda})}\right)+
\underbrace{s(\underline{x})\log\left(\frac{\nu^\nu(s(\underline{x}))^\nu}{s(\underline{\lambda})}\right)-\nu
s(\underline{x})+(s(\underline{\lambda}))^{1/\nu}}_{=I_{\nu,s(\underline{\lambda})}^{(A)}(s(\underline{x}))},
\end{multline*}
where $I_{\nu,s(\underline{\lambda})}^{(A)}$ concerns the notation
used for the rate function in Proposition 4.1 in
\cite{BeghinMacciSPL2013} (see Remark \ref{rem:LD-m=1}).
Obviously, for $\underline{x}=\underline{0}$, we have
$s(\underline{x})H\left(\frac{\underline{x}}{s(\underline{x})};\frac{\underline{\lambda}}{s(\underline{\lambda})}\right)=0$.
\end{remark}

The next proposition concerns moderate deviations. In view of what
follows we need to introduce the matrix
$C=(c_{jk})_{j,k\in\{1,\ldots,m\}}$ defined by
\begin{equation}\label{eq:def-covariance-matrix}
c_{jk}^{(\nu)}:=\left\{\begin{array}{ll}
\frac{1}{\nu}\left(\frac{1}{\nu}-1\right)(s(\underline{\lambda}))^{1/\nu-2}\lambda_j\lambda_k&\ \mbox{if}\ j\neq k\\
\frac{1}{\nu}\left(\frac{1}{\nu}-1\right)(s(\underline{\lambda}))^{1/\nu-2}\lambda_j^2
+\frac{1}{\nu}(s(\underline{\lambda}))^{1/\nu-1}\lambda_j&\
\mbox{if}\ j=k
\end{array}\right.
\end{equation}
and the function $\tilde{\Lambda}$ defined by
\begin{equation}\label{eq:def-Lambda-tilde}
\tilde{\Lambda}(\underline{\theta}):=\frac{1}{2}\langle\underline{\theta},C\underline{\theta}\rangle.
\end{equation}

\begin{proposition}\label{prop:MD}
For all families of positive numbers $\{a_t:t>0\}$ such that
\eqref{eq:MD-conditions} holds, the family of random variables
$\left\{\sqrt{ta_t}\cdot\frac{\underline{M}^\nu(t)-\mathbb{E}[\underline{M}^\nu(t)]}{t}:t>0\right\}$
satisfies the LDP with speed $1/a_t$ and good rate function
$\tilde{\Lambda}^*$ defined by
$$\tilde{\Lambda}^*(\underline{x}):=\sup_{\underline{\theta}\in\mathbb{R}^m}
\{\langle\underline{\theta},\underline{x}\rangle-\tilde{\Lambda}(\underline{\theta})\}.$$
\end{proposition}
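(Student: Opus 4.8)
The plan is to apply the G\"{a}rtner-Ellis theorem with speed $1/a_t$, exactly as in the proof of Proposition \ref{prop:LD} but now centered and rescaled. Writing $Z_t:=\sqrt{ta_t}\,\frac{\underline{M}^\nu(t)-\mathbb{E}[\underline{M}^\nu(t)]}{t}$ and $\underline{\theta}_t:=\underline{\theta}/\sqrt{ta_t}$, the identity $\frac{1}{a_t}\langle\underline{\theta},Z_t\rangle=\langle\underline{\theta}_t,\underline{M}^\nu(t)-\mathbb{E}[\underline{M}^\nu(t)]\rangle$ reduces the relevant scaled cumulant generating function to
$$a_t\log\mathbb{E}\!\left[e^{\frac{1}{a_t}\langle\underline{\theta},Z_t\rangle}\right]=a_t\log\mathbb{E}\!\left[e^{\langle\underline{\theta}_t,\underline{M}^\nu(t)\rangle}\right]-a_t\langle\underline{\theta}_t,\mathbb{E}[\underline{M}^\nu(t)]\rangle.$$
The goal is to show this tends to $\tilde{\Lambda}(\underline{\theta})=\frac{1}{2}\langle\underline{\theta},C\underline{\theta}\rangle$ for every $\underline{\theta}$; since that limit is finite and differentiable everywhere, G\"{a}rtner-Ellis then delivers the LDP with good rate function $\tilde{\Lambda}^*$.

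Both pieces reduce to the function $\Lambda$ of \eqref{eq:GElimit-LD}. As $\underline{\theta}_t\to\underline{0}$ we have $\sum_{i=1}^m\lambda_ie^{\theta_{t,i}}\to s(\underline{\lambda})\in(0,\infty)$, so the two arguments of the Mittag-Leffler functions in \eqref{eq:MGF} both tend to $+\infty$; applying \eqref{eq:ML-asymptotic-with-remainder} to each and taking logarithms yields $\log\mathbb{E}[e^{\langle\underline{\theta}_t,\underline{M}^\nu(t)\rangle}]=t\,\Lambda(\underline{\theta}_t)+\rho_t$, where $\rho_t\to 0$ exponentially fast for some $c>0$ (since the leading term $\frac{1}{\nu}e^{z^{1/\nu}}$ is exponentially large, the remainder $r(z)$ enters the logarithm only through $\log(1+\nu r(z)e^{-z^{1/\nu}})=O(e^{-ct})$). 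In the same way \eqref{eq:expected-value} with \eqref{eq:ML-asymptotic-with-remainder} and \eqref{eq:gradiente-Lambda-0} give $\mathbb{E}[\underline{M}^\nu(t)]=t\,\nabla\Lambda(\underline{0})+\sigma_t$ with $\|\sigma_t\|=o(t\,e^{-ct})$. Substituting, the contribution $a_t\rho_t$ vanishes and $a_t\langle\underline{\theta}_t,\sigma_t\rangle=\sqrt{a_t/t}\,\langle\underline{\theta},\sigma_t\rangle$ vanishes as well, leaving the single main term $a_t t\bigl(\Lambda(\underline{\theta}_t)-\langle\underline{\theta}_t,\nabla\Lambda(\underline{0})\rangle\bigr)$.

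To evaluate this I would Taylor-expand $\Lambda$ at the origin. Differentiating \eqref{eq:GElimit-LD} twice shows that the Hessian of $\Lambda$ at $\underline{0}$ is precisely the matrix $C$ of \eqref{eq:def-covariance-matrix}; together with $\Lambda(\underline{0})=0$ this gives $\Lambda(\underline{\theta}_t)-\langle\underline{\theta}_t,\nabla\Lambda(\underline{0})\rangle=\frac{1}{2}\langle\underline{\theta}_t,C\underline{\theta}_t\rangle+O(|\underline{\theta}_t|^3)$. Since $\langle\underline{\theta}_t,C\underline{\theta}_t\rangle=\frac{1}{ta_t}\langle\underline{\theta},C\underline{\theta}\rangle$ and $|\underline{\theta}_t|^3=O((ta_t)^{-3/2})$, multiplication by $a_t t$ sends the cubic remainder to $O((ta_t)^{-1/2})\to 0$ and returns exactly $\frac{1}{2}\langle\underline{\theta},C\underline{\theta}\rangle=\tilde{\Lambda}(\underline{\theta})$. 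A final short computation shows $\langle\underline{\theta},C\underline{\theta}\rangle=\frac{1}{\nu}(\frac{1}{\nu}-1)(s(\underline{\lambda}))^{1/\nu-2}\langle\underline{\lambda},\underline{\theta}\rangle^2+\frac{1}{\nu}(s(\underline{\lambda}))^{1/\nu-1}\sum_{i=1}^m\lambda_i\theta_i^2>0$ for $\underline{\theta}\neq\underline{0}$ when $0<\nu\le 1$, so $C$ is positive definite and $\tilde{\Lambda}^*$ is a genuine good rate function vanishing only at the origin.

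The step I expect to be the main obstacle is the cancellation of the linear contributions: the quantities $a_t t\,\langle\underline{\theta}_t,\nabla\Lambda(\underline{0})\rangle$ and $a_t\langle\underline{\theta}_t,\mathbb{E}[\underline{M}^\nu(t)]\rangle$ each diverge like $\sqrt{ta_t}\to\infty$, and only their difference stays bounded. Controlling this requires $\sqrt{a_t/t}\,\|\sigma_t\|\to 0$, which is exactly where the exponential (rather than merely $o(1)$) smallness coming from \eqref{eq:ML-asymptotic-with-remainder} is essential: dividing the subexponential remainder by the exponentially large leading term forces $\|\sigma_t\|=o(t\,e^{-ct})$, whence $\sqrt{a_t/t}\,\|\sigma_t\|=o(\sqrt{ta_t}\,e^{-ct})\to 0$, since $a_t\le 1$ eventually makes $\sqrt{ta_t}\le\sqrt{t}$ negligible against $e^{-ct}$.
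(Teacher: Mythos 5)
Your proposal is correct and follows essentially the same route as the paper's proof: Gärtner--Ellis with speed $1/a_t$, the Mittag--Leffler expansion with remainder \eqref{eq:ML-asymptotic-with-remainder}, a second-order Taylor expansion of $\Lambda$ at $\underline{0}$ with Hessian $C$, and — crucially, just as in the paper — the observation that the mismatch between $\mathbb{E}[\underline{M}^\nu(t)]/t$ and $\nabla\Lambda(\underline{0})$ decays exponentially fast and therefore beats the possible divergence of $\sqrt{ta_t}$. Your grouping of the error terms ($\rho_t$ from the moment generating function and $\sigma_t$ from the mean) is a cosmetic reorganization of the paper's $A_1(t)+A_2(t)$ decomposition, and the added positive-definiteness check of $C$ is a harmless extra.
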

\begin{proof}
We apply G\"{a}rtner Ellis Theorem and the desired LDP holds if we
prove that
$$\lim_{t\to\infty}\underbrace{\frac{1}{1/a_t}\log
\mathbb{E}\left[e^{\frac{1}{a_t}\cdot\sqrt{ta_t}\cdot\langle\underline{\theta},\frac{\underline{M}^\nu(t)-\mathbb{E}[\underline{M}^\nu(t)]}{t}\rangle}\right]}
_{=:\Lambda_t(\underline{\theta})}=\tilde{\Lambda}(\underline{\theta})\
(\mbox{for all}\ \underline{\theta}\in\mathbb{R}^m).$$ We start
with some manipulations where we take into account \eqref{eq:MGF}
and \eqref{eq:expected-value}:
\begin{multline*}
\Lambda_t(\underline{\theta})
=a_t\log\mathbb{E}\left[e^{\frac{1}{\sqrt{ta_t}}\langle\underline{\theta},\underline{M}^\nu(t)-\mathbb{E}[\underline{M}^\nu(t)]\rangle}\right]\\
=a_t\left(\log\mathbb{E}\left[e^{\frac{1}{\sqrt{ta_t}}\langle\underline{\theta},\underline{M}^\nu(t)\rangle}\right]
-\frac{1}{\sqrt{ta_t}}\langle\underline{\theta},\mathbb{E}[\underline{M}^\nu(t)]\rangle\right)\\
=a_t\left(\log\frac{E_{\nu,1}\left(\left(\sum_{i=1}^m\lambda_ie^{\theta_i/\sqrt{ta_t}}\right)t^\nu\right)}{E_{\nu,1}(s(\underline{\lambda})t^\nu)}
-\frac{1}{\sqrt{ta_t}}\frac{E_{\nu,\nu}(s(\underline{\lambda})t^\nu)}{E_{\nu,1}(s(\underline{\lambda})t^\nu)}\frac{t^\nu}{\nu}
\langle\underline{\theta},\underline{\lambda}\rangle\right).
\end{multline*}
Thus, after some computations, we get
$$\Lambda_t(\underline{\theta})=A_1(t)+A_2(t)$$
where
$$A_1(t):=a_t\left(\log\frac{E_{\nu,1}\left(\left(\sum_{i=1}^m\lambda_ie^{\theta_i/\sqrt{ta_t}}\right)t^\nu\right)}
{\frac{1}{\nu}e^{(\sum_{i=1}^m\lambda_ie^{\theta_i/\sqrt{ta_t}})^{1/\nu}\cdot
t}}-\log\frac{E_{\nu,1}(s(\underline{\lambda})t^\nu)}{\frac{1}{\nu}e^{(s(\underline{\lambda}))^{1/\nu}\cdot
t}}\right)$$ and, if we consider the function $\Lambda$ in
\eqref{eq:GElimit-LD},
\begin{multline*}
A_2(t):=ta_t\left(\frac{1}{t}\log\frac{\frac{1}{\nu}e^{(\sum_{i=1}^m\lambda_ie^{\theta_i/\sqrt{ta_t}})^{1/\nu}\cdot
t}}{\frac{1}{\nu}e^{(s(\underline{\lambda}))^{1/\nu}\cdot
t}}-\frac{1}{\sqrt{ta_t}}\frac{E_{\nu,\nu}(s(\underline{\lambda})t^\nu)}{E_{\nu,1}(s(\underline{\lambda})t^\nu)}\frac{t^{\nu-1}}{\nu}
\langle\underline{\theta},\underline{\lambda}\rangle\right)\\
=ta_t\left(\Lambda\left(\frac{1}{\sqrt{ta_t}}\cdot\underline{\theta}\right)
-\frac{1}{\sqrt{ta_t}}\frac{E_{\nu,\nu}(s(\underline{\lambda})t^\nu)}{E_{\nu,1}(s(\underline{\lambda})t^\nu)}
\frac{t^{\nu-1}}{\nu}\langle\underline{\theta},\underline{\lambda}\rangle\right).
\end{multline*}

Then, for all $\underline{\theta}\in\mathbb{R}^m$, we have
$A_1(t)\to 0$ as $t\to\infty$ (this is a consequence of $a_t\to
0$, stated in \eqref{eq:MD-conditions}, and
\eqref{eq:ML-asymptotic}), and we complete the proof showing that
\begin{equation}\label{eq:final-step-for-MD-part1}
\lim_{t\to\infty}A_2(t)=\tilde{\Lambda}(\underline{\theta})
\end{equation}
where $\tilde{\Lambda}$ is the function in
\eqref{eq:def-Lambda-tilde}. Now we consider the Taylor formula
for $\Lambda$, and we have
$$\Lambda(\underline{\eta})=\Lambda(\underline{0})+\langle\nabla\Lambda(\underline{0}),\underline{\eta}\rangle
+\frac{1}{2}\langle\underline{\eta},H_\Lambda(\underline{0})\underline{\eta}\rangle+o(\|\underline{\eta}\|^2)
=\frac{1}{\nu}\cdot(s(\underline{\lambda}))^{1/\nu-1}\langle
\underline{\lambda},\underline{\eta}\rangle+\frac{1}{2}\langle\underline{\eta},C\underline{\eta}\rangle+o(\|\underline{\eta}\|^2),$$
where $\frac{o(\|\underline{\eta}\|^2)}{\|\underline{\eta}\|^2}\to
0$ as $\|\underline{\eta}\|\to 0$ (we have taken in into account
$\Lambda(\underline{0})=0$, \eqref{eq:gradiente-Lambda-0} and the
equality $H_\Lambda(\underline{0})=C$ which can be checked by
inspection); then, after some computations where we take into
account \eqref{eq:def-Lambda-tilde}, we obtain
\begin{multline*}
A_2(t)=ta_t\left(\frac{1}{\sqrt{ta_t}}\frac{1}{\nu}\cdot(s(\underline{\lambda}))^{1/\nu-1}\langle
\underline{\lambda},\underline{\theta}\rangle+\frac{1}{2}\frac{1}{ta_t}\langle\underline{\theta},C\underline{\theta}\rangle+o\left(\frac{1}{ta_t}\right)
-\frac{1}{\sqrt{ta_t}}\frac{E_{\nu,\nu}(s(\underline{\lambda})t^\nu)}{E_{\nu,1}(s(\underline{\lambda})t^\nu)}
\frac{t^{\nu-1}}{\nu}\langle\underline{\theta},\underline{\lambda}\rangle\right)\\
=\tilde{\Lambda}(\underline{\theta})+ta_to\left(\frac{1}{ta_t}\right)
+\frac{\langle\underline{\theta},\underline{\lambda}\rangle}{\nu}\cdot
\sqrt{ta_t}\left((s(\underline{\lambda}))^{1/\nu-1}-\frac{E_{\nu,\nu}(s(\underline{\lambda})t^\nu)}{E_{\nu,1}(s(\underline{\lambda})t^\nu)}t^{\nu-1}\right).
\end{multline*}
Then we get \eqref{eq:final-step-for-MD-part1} if we prove that
$$\lim_{t\to\infty}\sqrt{ta_t}\left((s(\underline{\lambda}))^{1/\nu-1}-\frac{E_{\nu,\nu}(s(\underline{\lambda})t^\nu)}
{E_{\nu,1}(s(\underline{\lambda})t^\nu)}t^{\nu-1}\right)=0.$$ This
is true because
$\left((s(\underline{\lambda}))^{1/\nu-1}-\frac{E_{\nu,\nu}(s(\underline{\lambda})t^\nu)}{E_{\nu,1}(s(\underline{\lambda})t^\nu)}t^{\nu-1}\right)$
goes to zero exponentially fast (as $t\to\infty$), and therefore
it goes to zero faster than the possible divergence of
$\sqrt{ta_t}$; in fact, for two suitable remainder terms $r_1(t)$
and $r_2(t)$ concerning the expansions of Mittag-Leffler functions
in \eqref{eq:ML-asymptotic-with-remainder}, we have
\begin{multline*}
(s(\underline{\lambda}))^{1/\nu-1}-\frac{E_{\nu,\nu}(s(\underline{\lambda})t^\nu)}{E_{\nu,1}(s(\underline{\lambda})t^\nu)}t^{\nu-1}
=\frac{(s(\underline{\lambda}))^{1/\nu-1}E_{\nu,1}(s(\underline{\lambda})t^\nu)-E_{\nu,\nu}(s(\underline{\lambda})t^\nu)t^{\nu-1}}
{E_{\nu,1}(s(\underline{\lambda})t^\nu)}\\
=\frac{(s(\underline{\lambda}))^{1/\nu-1}\left(\frac{1}{\nu}e^{(s(\underline{\lambda}))^{1/\nu}\cdot
t}+r_1(t)\right)-\left(\frac{1}{\nu}(s(\underline{\lambda})t^\nu)^{1/\nu-1}e^{(s(\underline{\lambda}))^{1/\nu}\cdot
t}+r_2(t)\right)t^{\nu-1}}{\frac{1}{\nu}e^{(s(\underline{\lambda}))^{1/\nu}\cdot
t}+r_1(t)}\\
=\frac{(s(\underline{\lambda}))^{1/\nu-1}r_1(t)-r_2(t)t^{\nu-1}}{\frac{1}{\nu}e^{(s(\underline{\lambda}))^{1/\nu}\cdot
t}+r_1(t)}.
\end{multline*}
Thus \eqref{eq:final-step-for-MD-part1} holds, and the proof of
the proposition is complete.
\end{proof}

The following remarks concern Proposition \ref{prop:MD}. In
particular Remark \ref{rem:MD-nu=1} has some connections with
Remark \ref{rem:LD-nu=1} presented above.

\begin{remark}[The rate function $\tilde{\Lambda}^*$ when $C$ is invertible]\label{rem:invertible-C}
If $C$ is invertible one can check that, for all
$\underline{x}\in\mathbb{R}^m$,
$$\tilde{\Lambda}^*(\underline{x}):=\langle C^{-1}\underline{x},\underline{x}\rangle-\tilde{\Lambda}(C^{-1}\underline{x})
=\frac{1}{2}\langle\underline{x},C^{-1}\underline{x}\rangle.$$
\end{remark}

\begin{remark}[The case $\nu=1$]\label{rem:MD-nu=1}
We have
$$c_{jk}^{(1)}:=\left\{\begin{array}{ll}
0&\ \mbox{if}\ j\neq k\\
\lambda_j&\ \mbox{if}\ j=k
\end{array}\right.$$
by \eqref{eq:def-covariance-matrix}. Moreover $C$ is invertible
(since $\underline{\lambda}\in(0,\infty)^m$) and, by Remark
\ref{rem:invertible-C}, we have
$$\tilde{\Lambda}_{(\nu=1)}^*(\underline{x}):=\left\{\begin{array}{ll}
\frac{1}{2}\sum_{i=1}^m\frac{x_i^2}{\lambda_i}&\ \mbox{if}\ \underline{x}\in[0,\infty)^m\\
\infty&\ \mbox{otherwise}.
\end{array}\right.$$
We can also say that
$\tilde{\Lambda}_{(\nu=1)}^*(\underline{x})=\sum_{i=1}^m\tilde{I}_{1,\lambda}^{(A)}(x_i)$
for all $\underline{x}\in\mathbb{R}^m$, where
$\tilde{I}_{\nu,\lambda}^{(A)}$ is the rate function
$\tilde{\Lambda}_{(\nu=1)}^*$ for $m=1$. This agrees with what we
said in Remark \ref{rem:LD-nu=1} (in particular we mean the
independence of the one-dimensional marginal processes
$\{M_1^1(t):t\geq 0\},\ldots,\{M_m^1(t):t\geq 0\}$).
\end{remark}

\begin{remark}[Asymptotic Normality]\label{rem:Asymptotic-Normality}
The computations in the proof of Proposition \ref{prop:MD} still
work even if $a_t=1$ (a case in which the first condition in
\eqref{eq:MD-conditions} fails). Then
$\frac{\underline{M}^\nu(t)-\mathbb{E}[\underline{M}^\nu(t)]}{\sqrt{t}}$
converges weakly (as $t\to\infty$) to the centered Normal
distribution with covariance matrix $C$.
\end{remark}

\section{Statistical applications}\label{sec:statistical-applications}
In this section we present an estimator $\hat{\mathcal{V}}_t$ of
$\nu$, and the vector $\underline{\lambda}$ is assumed to be
known. The aim is to present some asymptotic results (as
$t\to\infty$).

In particular we also assume that $s(\underline{\lambda})\geq 1$.
In fact the function $f_a:(0,\infty)\to(0,\infty)$ defined by
$f_a(x):=\frac{1}{x}\cdot a^{1/x}$ is invertible if $a\geq 1$
(this can be checked noting that
$$f_a^\prime(x)=\frac{a^{1/x}(-\frac{1}{x}\log a-1)}{x^2},$$
and therefore $f_a^\prime(x)<0$ on $(0,\infty)$); then, since
$s(\underline{\lambda})\geq 1$, we consider the estimator defined
by
\begin{equation}\label{eq:estimator-definition}
\hat{\mathcal{V}}_t:=g_{s(\underline{\lambda})}\left(\frac{s(\underline{M}^\nu(t))}{t}\right),
\end{equation}
where $g_{s(\underline{\lambda})}$ is the inverse of
$f_{s(\underline{\lambda})}$. It is quite natural to consider this
estimator because of its consistency; in fact
$\frac{s(\underline{M}^\nu(t))}{t}$ converges to
$\frac{1}{\nu}\cdot(s(\underline{\lambda}))^{1/\nu}$ (as
$t\to\infty$), which is the sum of the components of the vector in
\eqref{eq:gradiente-Lambda-0}.

It is also worth noting that the argument of
$g_{s(\underline{\lambda})}$ can be equal to zero; so we need to
consider
$f_{s(\underline{\lambda})},g_{s(\underline{\lambda})}:[0,\infty]\to[0,\infty]$
where
$f_{s(\underline{\lambda})}(0)=g_{s(\underline{\lambda})}(0)=\infty$,
$f_{s(\underline{\lambda})}(0)=g_{s(\underline{\lambda})}(0)=\infty$
and $[0,\infty]$ is endowed with a suitable topology (an extended
version of the one on $(0,\infty)$) with respect to which
$f_{s(\underline{\lambda})},g_{s(\underline{\lambda})}:[0,\infty]\to[0,\infty]$
are continuous functions between Hausdorff topological spaces. The
continuity of $g_{s(\underline{\lambda})}$ is required for the
application of the contraction principle (see e.g. Theorem 4.2.1
in \cite{DemboZeitouni}) in the proof of the next proposition.

\begin{proposition}\label{prop:LD-estimators}
Assume that $s(\underline{\lambda})\geq 1$. Then the family of
random variables $\left\{\hat{\mathcal{V}}_t:t>0\right\}$
satisfies the LDP with speed $t$ and good rate function $J_\nu$
defined by
$$J_\nu(\hat{\nu}):=\left\{\begin{array}{ll}
\frac{\nu}{\hat{\nu}}\cdot(s(\underline{\lambda}))^{1/\hat{\nu}}\log\left(\frac{\nu}{\hat{\nu}}\cdot(s(\underline{\lambda}))^{1/\hat{\nu}-1/\nu}\right)
-\frac{\nu}{\hat{\nu}}\cdot(s(\underline{\lambda}))^{1/\hat{\nu}}+(s(\underline{\lambda}))^{1/\nu}&\ \mbox{if}\ \hat{\nu}\geq 0\\
\infty&\ \mbox{if}\ \hat{\nu}<0.
\end{array}\right.$$
\end{proposition}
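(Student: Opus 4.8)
The plan is to obtain the LDP for $\{\hat{\mathcal{V}}_t:t>0\}$ as the push-forward, under the continuous map $g_{s(\underline{\lambda})}$, of an LDP for the rescaled sum process; this is natural because, by \eqref{eq:estimator-definition}, $\hat{\mathcal{V}}_t=g_{s(\underline{\lambda})}\!\left(s(\underline{M}^\nu(t))/t\right)$, so the \emph{contraction principle} (Theorem 4.2.1 in \cite{DemboZeitouni}) is the right tool.

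First I would establish that $\{s(\underline{M}^\nu(t))/t:t>0\}$ satisfies the LDP with speed $t$ and good rate function $I_{\nu,s(\underline{\lambda})}^{(A)}$. This is immediate from the construction: the one-dimensional marginals of the sum process coincide with those of the univariate alternative fractional Poisson process with parameter $s(\underline{\lambda})$, so this is exactly the case $m=1$ of Proposition \ref{prop:LD}, i.e. Proposition 4.1 in \cite{BeghinMacciSPL2013} recalled in Remark \ref{rem:LD-m=1}. Equivalently, one may contract Proposition \ref{prop:LD} along the linear map $\underline{x}\mapsto s(\underline{x})$ and invoke the decomposition of $\Lambda^*$ in Remark \ref{rem:LD-alternative-expression-rf}: minimizing over all $\underline{x}$ with a prescribed sum kills the relative-entropy term and leaves precisely $I_{\nu,s(\underline{\lambda})}^{(A)}(s(\underline{x}))$.

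Next I would apply the contraction principle with the continuous bijection $g_{s(\underline{\lambda})}:[0,\infty]\to[0,\infty]$, whose continuity on the extended space has been arranged before the statement. Since $g_{s(\underline{\lambda})}$ is a bijection with inverse $f_{s(\underline{\lambda})}$, the transformed rate function is a one-point infimum,
$$J_\nu(\hat{\nu})=I_{\nu,s(\underline{\lambda})}^{(A)}\bigl(f_{s(\underline{\lambda})}(\hat{\nu})\bigr),\qquad f_{s(\underline{\lambda})}(\hat{\nu})=\frac{1}{\hat{\nu}}\,(s(\underline{\lambda}))^{1/\hat{\nu}},$$
and it is automatically good because $I_{\nu,s(\underline{\lambda})}^{(A)}$ is good and $g_{s(\underline{\lambda})}$ is continuous. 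It then remains to check by a direct computation that this composition equals the expression in the statement: writing $a=s(\underline{\lambda})$ and $z=\nu f_a(\hat{\nu})=\frac{\nu}{\hat{\nu}}a^{1/\hat{\nu}}$, the value $I_{\nu,a}^{(A)}(f_a(\hat{\nu}))=z\log z-f_a(\hat{\nu})\log a-z+a^{1/\nu}$ simplifies, after collecting the logarithms, to $z\log\bigl(\frac{\nu}{\hat{\nu}}a^{1/\hat{\nu}-1/\nu}\bigr)-z+a^{1/\nu}$, which is exactly $J_\nu(\hat{\nu})$.

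The step requiring genuine care is not the algebra but the topological framing of the contraction: the LDP for $s(\underline{M}^\nu(t))/t$ and the map $g_{s(\underline{\lambda})}$ must live on the same extended space $[0,\infty]$, so that the boundary regimes are treated consistently — the event $s(\underline{M}^\nu(t))=0$, sending $\hat{\mathcal{V}}_t$ to $g_{s(\underline{\lambda})}(0)=\infty$, and the divergent regime of the sum, which corresponds to $\hat{\nu}=0$. The hypothesis $s(\underline{\lambda})\geq 1$ enters precisely here: it is what makes $f_{s(\underline{\lambda})}$ strictly decreasing (as recorded by the sign of $f_a^\prime$ before the statement), hence $g_{s(\underline{\lambda})}$ a genuine continuous inverse on $[0,\infty]$, so that the contraction principle applies without additional hypotheses.
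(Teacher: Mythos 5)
Your proposal is correct and follows essentially the same route as the paper: the paper applies the contraction principle to Proposition \ref{prop:LD} along the composite map $\underline{x}\mapsto g_{s(\underline{\lambda})}(s(\underline{x}))$ and then eliminates the relative-entropy term of $\Lambda^*$ via Remark \ref{rem:LD-alternative-expression-rf}, which is exactly your ``equivalently'' step, so factoring the contraction through the sum process first (or invoking the $m=1$ case directly) is only a cosmetic difference. Your algebraic identification of $I_{\nu,s(\underline{\lambda})}^{(A)}\bigl(f_{s(\underline{\lambda})}(\hat{\nu})\bigr)$ with $J_\nu(\hat{\nu})$, and your account of where $s(\underline{\lambda})\geq 1$ and the extended topology on $[0,\infty]$ enter, match the paper's proof.
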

\begin{proof}
If we combine Proposition \ref{prop:LD} and the contraction
principle, the desired LDP holds with speed $t$ and good rate
function $J_\nu$ defined by
$$J_\nu(\hat{\nu}):=\inf\{\Lambda^*(\underline{x}):g_{s(\underline{\lambda})}(s(\underline{x}))=\hat{\nu}\}.$$
So in what follows we manipulate the expression of $J_\nu$ here to
meet its expression in the statement of the proposition. The case
$\hat{\nu}<0$ is trivial because we have the infimum over the
empty set; thus, from now on, we restrict the attention on the
case $\hat{\nu}\geq 0$. Firstly we take into account the
expression of $\Lambda^*$ in Remark
\ref{rem:LD-alternative-expression-rf}, and we have
\begin{multline*}
J_\nu(\hat{\nu})=\inf\{\Lambda^*(\underline{x}):s(\underline{x})=f_{s(\underline{\lambda})}(\hat{\nu})\}\\
=f_{s(\underline{\lambda})}(\hat{\nu})\inf\left\{H\left(\frac{\underline{x}}{f_{s(\underline{\lambda})}(\hat{\nu})};
\frac{\underline{\lambda}}{s(\underline{\lambda})}\right):s(\underline{x})=f_{s(\underline{\lambda})}(\hat{\nu})\right\}
+I_{\nu,s(\underline{\lambda})}^{(A)}(f_{s(\underline{\lambda})}(\hat{\nu})).
\end{multline*}
Moreover the first term  is equal to zero; in fact, if
$f_{s(\underline{\lambda})}(\hat{\nu})>0$, for
$\underline{y}=\frac{f_{s(\underline{\lambda})}(\hat{\nu})}{s(\underline{\lambda})}\cdot\underline{\lambda}$
we have
$$\inf\left\{H\left(\frac{\underline{x}}{f_{s(\underline{\lambda})}(\hat{\nu})};
\frac{\underline{\lambda}}{s(\underline{\lambda})}\right):s(\underline{x})=f_{s(\underline{\lambda})}(\hat{\nu})\right\}=
H\left(\frac{\underline{y}}{f_{s(\underline{\lambda})}(\hat{\nu})};
\frac{\underline{\lambda}}{s(\underline{\lambda})}\right)=0.$$ In
conclusion we have
\begin{multline*}
J_\nu(\hat{\nu})=I_{\nu,s(\underline{\lambda})}^{(A)}(f_{s(\underline{\lambda})}(\hat{\nu}))=
f_{s(\underline{\lambda})}(\hat{\nu})\log\left(\frac{\nu^\nu(f_{s(\underline{\lambda})}(\hat{\nu}))^\nu}{s(\underline{\lambda})}\right)-\nu
f_{s(\underline{\lambda})}(\hat{\nu})+(s(\underline{\lambda}))^{1/\nu}\\
=\frac{\nu}{\hat{\nu}}\cdot(s(\underline{\lambda}))^{1/\hat{\nu}}\log\left(\frac{\nu}{\hat{\nu}}\cdot(s(\underline{\lambda}))^{1/\hat{\nu}-1/\nu}\right)
-\frac{\nu}{\hat{\nu}}\cdot(s(\underline{\lambda}))^{1/\hat{\nu}}+(s(\underline{\lambda}))^{1/\nu}
\end{multline*}
and this completes the proof.
\end{proof}

\begin{remark}[On the probability to have a bad estimate]\label{rem:bad-estimate}
The estimator $\hat{\mathcal{V}}_t$ can provide a bad estimate of
$\nu$ when is larger than 1. However we can say that the event
$\{\hat{\mathcal{V}}_t>1\}$ occurs with an exponentially small
probability; in fact we have $\lim_{t\to\infty}\frac{1}{t}\log
P(\{\hat{\mathcal{V}}_t>1\})=-J_\nu(1)$.
\end{remark}

\begin{remark}[An alternative expression of $J_\nu$]\label{rem:LD-alternative-expression-rf-estimators}
Let us consider the function $D(\cdot;\cdot)$ be defined by
$$D(\lambda_1;\lambda_2):=\lambda_1\log\frac{\lambda_1}{\lambda_2}-\lambda_1+\lambda_2$$
for $\lambda_1\geq 0$ and $\lambda_2>0$. Then, for $\hat{\nu}\geq
0$, we have
$$J_\nu(\hat{\nu})=D\left(\frac{\nu}{\hat{\nu}}\cdot(s(\underline{\lambda}))^{1/\hat{\nu}};(s(\underline{\lambda}))^{1/\nu}\right).$$
\end{remark}

The following corollary provides the asymptotic decay of the
probability of first kind error  for the hypothesis testing
$$H_0:\nu=\nu_0\ \mbox{versus}\ H_1:\nu=\nu_1,\ \mbox{with}\ \nu_0\neq\nu_1.$$
More precisely we mean $P_{H_0}(R_k)$ where $R_k$ is the critical
region defined by
$$R_k:=\left\{\begin{array}{ll}
\{\hat{\mathcal{V}}_t\geq k\}&\ \mbox{if}\ \nu_0<\nu_1,\ \mbox{for some}\ k>\nu_0\\
\{\hat{\mathcal{V}}_t\leq k\}&\ \mbox{if}\ \nu_0>\nu_1,\ \mbox{for
some}\ k<\nu_0.
\end{array}\right.$$

\begin{corollary}\label{corollary:hypothesis-testing}
Assume that $s(\underline{\lambda})\geq 1$. Then
$\lim_{t\to\infty}\frac{1}{t}\log P_{H_0}(R_k)=-J_{\nu_0}(k)$.
\end{corollary}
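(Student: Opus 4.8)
The plan is to read the statement off directly from the large deviation principle established in Proposition~\ref{prop:LD-estimators}, specialised to the null value $\nu=\nu_0$; note that the standing hypothesis $s(\underline{\lambda})\geq 1$ is exactly what that proposition requires. Under $H_0$ the family $\{\hat{\mathcal{V}}_t:t>0\}$ satisfies the LDP with speed $t$ and good rate function $J_{\nu_0}$. Since in both cases $R_k$ is a closed subset of the state space $[0,\infty]$, the LDP upper bound gives $\limsup_{t\to\infty}\frac{1}{t}\log P_{H_0}(R_k)\leq-\inf_{\hat{\nu}\in R_k}J_{\nu_0}(\hat{\nu})$, while applying the lower bound to the interior of $R_k$ gives $\liminf_{t\to\infty}\frac{1}{t}\log P_{H_0}(R_k)\geq-\inf_{\hat{\nu}\in\mathrm{int}(R_k)}J_{\nu_0}(\hat{\nu})$. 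The whole proof therefore reduces to checking that both infima equal $J_{\nu_0}(k)$.

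The key step is a unimodality property of $J_{\nu_0}$. From the proof of Proposition~\ref{prop:LD-estimators} one has the factorisation $J_{\nu_0}(\hat{\nu})=I_{\nu_0,s(\underline{\lambda})}^{(A)}(f_{s(\underline{\lambda})}(\hat{\nu}))$. The inner map $f_{s(\underline{\lambda})}$ is strictly decreasing (this was already verified before \eqref{eq:estimator-definition} via $f_a^\prime<0$), and the outer function $I_{\nu_0,s(\underline{\lambda})}^{(A)}$ is a strictly convex rate function that vanishes only at $y_0:=f_{s(\underline{\lambda})}(\nu_0)$, being strictly decreasing for arguments below $y_0$ and strictly increasing above it. Composing the two, $J_{\nu_0}$ is strictly decreasing on $(0,\nu_0)$, strictly increasing on $(\nu_0,\infty)$, vanishes only at $\hat{\nu}=\nu_0$, and tends to $\infty$ at the endpoints $0$ and $\infty$ of its effective domain.

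With this in hand the two cases are symmetric. If $\nu_0<\nu_1$ then $R_k=\{\hat{\nu}\geq k\}$ with $k>\nu_0$ lies entirely in the increasing branch, so $\inf_{\hat{\nu}\geq k}J_{\nu_0}(\hat{\nu})=J_{\nu_0}(k)$ is attained at the endpoint $k$; moreover the infimum over the interior $\{\hat{\nu}>k\}$ equals the same value $J_{\nu_0}(k)$ by continuity of $J_{\nu_0}$ at $k$. If $\nu_0>\nu_1$ then $R_k=\{\hat{\nu}\leq k\}$ with $k<\nu_0$ lies in the decreasing branch and the identical argument gives $\inf_{\hat{\nu}\leq k}J_{\nu_0}=\inf_{\hat{\nu}<k}J_{\nu_0}=J_{\nu_0}(k)$ (here the endpoint behaviour $J_{\nu_0}\to\infty$ near $0$ guarantees no competing minimiser). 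In either case the upper and lower bounds coincide, yielding $\lim_{t\to\infty}\frac{1}{t}\log P_{H_0}(R_k)=-J_{\nu_0}(k)$.

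The only real point to establish is the monotonicity of $J_{\nu_0}$, namely that the boundary value $J_{\nu_0}(k)$ is simultaneously the infimum over the closed critical region and over its interior. This is precisely what the composition structure $J_{\nu_0}=I_{\nu_0,s(\underline{\lambda})}^{(A)}\circ f_{s(\underline{\lambda})}$, together with the strict convexity of $I^{(A)}_{\nu_0,s(\underline{\lambda})}$ and the strict monotonicity of $f_{s(\underline{\lambda})}$, delivers; once it is noted, the conclusion is immediate and no further obstacle remains.
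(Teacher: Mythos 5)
Your proof is correct and takes essentially the same route as the paper: both read the limit off the LDP of Proposition~\ref{prop:LD-estimators} and then reduce everything to the fact that $J_{\nu_0}$ is decreasing on $(0,\nu_0)$ and increasing on $(\nu_0,\infty)$, obtained by composing a unimodal function with the decreasing map $f_{s(\underline{\lambda})}$ (the paper phrases this via the function $D(\cdot;\cdot)$ of Remark~\ref{rem:LD-alternative-expression-rf-estimators}, which is just your $I^{(A)}_{\nu_0,s(\underline{\lambda})}\circ f_{s(\underline{\lambda})}$ rewritten). One parenthetical claim is inaccurate---$J_{\nu_0}(\hat{\nu})$ does not tend to $\infty$ as $\hat{\nu}\to\infty$, but to the finite value $(s(\underline{\lambda}))^{1/\nu_0}$, since $f_{s(\underline{\lambda})}(\hat{\nu})\to 0$ and $I^{(A)}_{\nu_0,s(\underline{\lambda})}(0)=(s(\underline{\lambda}))^{1/\nu_0}$---but this is harmless because the monotonicity alone pins both infima at the endpoint $k$.
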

\begin{proof}
We have
$$\lim_{t\to\infty}\frac{1}{t}\log P_{H_0}(R_k)=-\left\{\begin{array}{ll}
\inf\{J_{\nu_0}(\hat{\nu}):\hat{\nu}\geq k\}&\ \mbox{if}\ \nu_0<\nu_1\\
\inf\{J_{\nu_0}(\hat{\nu}):\hat{\nu}\leq k\}&\ \mbox{if}\
\nu_0>\nu_1.
\end{array}\right.$$
Then, by taking into account the allowed range of values for $k$,
the proof is complete if we show that $J_{\nu_0}(\hat{\nu})$ is
decreasing if $\hat{\nu}<\nu_0$ and is increasing if
$\hat{\nu}>\nu_0$ (note that $J_{\nu_0}(\nu_0)=0$). In order to do
that we recall that the monotonicity intervals for $\lambda_1$
(when $\lambda_2$ is fixed) of the function
$D(\lambda_1;\lambda_2)$ in Remark
\ref{rem:LD-alternative-expression-rf-estimators}: it is
decreasing for $\lambda_1\in(0,\lambda_2)$, is increasing for
$\lambda_1\in(\lambda_2,\infty)$, and $D(\lambda_2;\lambda_2)=0$.
Then, since
$f_{s(\underline{\lambda})}(\hat{\nu})=\frac{1}{\hat{\nu}}\cdot(s(\underline{\lambda}))^{1/\hat{\nu}}$
is decreasing,
$J_{\nu_0}(\hat{\nu})=D\left(\frac{\nu_0}{\hat{\nu}}\cdot(s(\underline{\lambda}))^{1/\hat{\nu}};(s(\underline{\lambda}))^{1/\nu_0}\right)$
decreases (to zero) when $\hat{\nu}$ moves from 0 to $\nu_0$, and
increases (from zero) when $\hat{\nu}$ moves from $\nu_0$ to
infinity.
\end{proof}

\end{document}